\documentclass{amsart}

\title{Exact colinearity of centroids of iterated midpoint hexagons}
\author{Jack Edward Tisdell}
\address{McGill University, Montr\'eal, Quebec, Canada}
\email{jack.tisdell@mail.mcgill.ca}
\subjclass[2020]{51F, 51M04}
\keywords{Polygon iteration, centroid, finite Fourier series}

\usepackage{tikz}
\usetikzlibrary{calc}
\usepackage{pgfplots}
\pgfplotsset{
    compat=newest,
}
\usepackage{amsmath,amssymb,amsthm}
\usepackage{mathtools}

\newcommand*\C{\mathbb C}

\DeclarePairedDelimiter\abs\lvert\rvert

\newtheorem{theorem}{Theorem}
\newtheorem{proposition}[theorem]{Proposition}
\newtheorem{lemma}[theorem]{Lemma}

\theoremstyle{definition}

\usepackage[]{hyperref}

\begin{document}

\maketitle

\begin{abstract}
    We study the iteration that replaces a planar hexagon by the hexagon formed by joining the midpoints of consecutive edges. While this iteration quickly drives any polygon toward a point and their shapes asymptotically regularize, we show a stronger and unexpected rigidity holds for hexagons: from the second iterate onward, the centroids of the filled hexagons all lie exactly on a fixed line. This exact colinearity reflects a special algebraic feature of the hexagonal case and does not hold generally for any other polygons.
\end{abstract}

\section{Introduction}
\label{sec:intro}

We consider the following iteration on planar hexagons: given a hexagon, form a new one by joining the midpoints of consecutive edges, and repeat. This construction quickly drives any polygon toward the centroid of the original vertices and, after rescaling, the limiting shape regularizes in a sense. In this note, we show that a stronger and more rigid phenomenon occurs.\footnote{The author thanks Dao Thanh Oai for pointing out the colinearity phenomenon in a MathOverflow discussion: \url{https://mathoverflow.net/q/507514/490554}. This note is adapted from the author's answer there.}

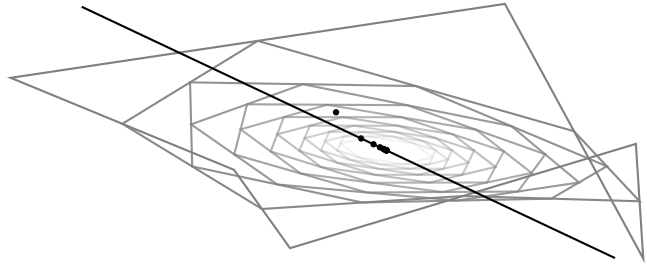
\begin{figure}[ht]
    \centering
    \begin{tikzpicture}[scale=1.5, rotate=15]

      \pgfmathsetmacro{\xA}{0.0}   \pgfmathsetmacro{\yA}{0.4}
      \pgfmathsetmacro{\xB}{3.2}   \pgfmathsetmacro{\yB}{0.5}
      \pgfmathsetmacro{\xC}{3.0}   \pgfmathsetmacro{\yC}{-.5}
      \pgfmathsetmacro{\xD}{2.4}   \pgfmathsetmacro{\yD}{2.0}
      \pgfmathsetmacro{\xE}{-2}   \pgfmathsetmacro{\yE}{2.5}
      \pgfmathsetmacro{\xF}{-0.3}  \pgfmathsetmacro{\yF}{1.2}

      \coordinate (C) at ({1/6*(\xA+\xB+\xC+\xD+\xE+\xF)},{1/6*(\yA+\yB+\yC+\yD+\yE+\yF)});

      \pgfmathsetmacro{\n}{13}
      \foreach \step in {0,...,\n}{
          \pgfmathsetmacro{\t}{100*\step/\n}

          \draw[gray, thick, opacity={1-.9/\n*\step}]
          (\xA,\yA)--(\xB,\yB)--(\xC,\yC)--(\xD,\yD)--(\xE,\yE)--(\xF,\yF)--cycle;

        \pgfmathsetmacro{\twiceA}{
          \xA*\yB-\xB*\yA +
          \xB*\yC-\xC*\yB +
          \xC*\yD-\xD*\yC +
          \xD*\yE-\xE*\yD +
          \xE*\yF-\xF*\yE +
          \xF*\yA-\xA*\yF
        }

        \pgfmathsetmacro{\Cxnum}{
          (\xA+\xB)*(\xA*\yB-\xB*\yA) +
          (\xB+\xC)*(\xB*\yC-\xC*\yB) +
          (\xC+\xD)*(\xC*\yD-\xD*\yC) +
          (\xD+\xE)*(\xD*\yE-\xE*\yD) +
          (\xE+\xF)*(\xE*\yF-\xF*\yE) +
          (\xF+\xA)*(\xF*\yA-\xA*\yF)
        }

        \pgfmathsetmacro{\Cynum}{
          (\yA+\yB)*(\xA*\yB-\xB*\yA) +
          (\yB+\yC)*(\xB*\yC-\xC*\yB) +
          (\yC+\yD)*(\xC*\yD-\xD*\yC) +
          (\yD+\yE)*(\xD*\yE-\xE*\yD) +
          (\yE+\yF)*(\xE*\yF-\xF*\yE) +
          (\yF+\yA)*(\xF*\yA-\xA*\yF)
        }

        \pgfmathsetmacro{\Cx}{\Cxnum/(3*\twiceA)}
        \pgfmathsetmacro{\Cy}{\Cynum/(3*\twiceA)}

        \coordinate (G\step) at (\Cx,\Cy);
        \fill (G\step) circle[radius=.8pt];

        \pgfmathsetmacro{\nxA}{(\xA+\xB)/2} \pgfmathsetmacro{\nyA}{(\yA+\yB)/2}
        \pgfmathsetmacro{\nxB}{(\xB+\xC)/2} \pgfmathsetmacro{\nyB}{(\yB+\yC)/2}
        \pgfmathsetmacro{\nxC}{(\xC+\xD)/2} \pgfmathsetmacro{\nyC}{(\yC+\yD)/2}
        \pgfmathsetmacro{\nxD}{(\xD+\xE)/2} \pgfmathsetmacro{\nyD}{(\yD+\yE)/2}
        \pgfmathsetmacro{\nxE}{(\xE+\xF)/2} \pgfmathsetmacro{\nyE}{(\yE+\yF)/2}
        \pgfmathsetmacro{\nxF}{(\xF+\xA)/2} \pgfmathsetmacro{\nyF}{(\yF+\yA)/2}

        \xdef\xA{\nxA} \xdef\yA{\nyA}
        \xdef\xB{\nxB} \xdef\yB{\nyB}
        \xdef\xC{\nxC} \xdef\yC{\nyC}
        \xdef\xD{\nxD} \xdef\yD{\nyD}
        \xdef\xE{\nxE} \xdef\yE{\nyE}
        \xdef\xF{\nxF} \xdef\yF{\nyF}
      }
      \draw[thick] ($(C)!12!(G1)$) -- ($(C)!-9!(G1)$);

    \end{tikzpicture}
    \caption{Starting with any planar hexagon and iteratively forming the hexagon by joining the midpoints of consecutive sides, all the centroids except possibly the first lie on a common line.}
    \label{fig:example}
\end{figure}

Let $P_0$ be any closed hexagon in the plane (no convexity, simplicity, or non-degeneracy assumptions are imposed) and let $P_{n+1}$ be obtained from $P_n$ by joining the midpoints of consecutive edges. Denote by $G_n$ the centroid of $P_n$ defined algebraically in terms of the vertex coordinates by the standard formula for polygon centroids. (In the case of simple polygons, this is the centroid of the filled polygon, but the algebraic definition extends to arbitrary polygons.) Our main result is the following.

\begin{theorem}
    From the second iterate onward (that is, excluding $G_0$), the points $G_n$ ($n\ge 1$) lie on a fixed line and converge eventually monotonically along it to the centroid of the vertices of $P_0$.
    \label{thm:centroid_colinearity}
\end{theorem}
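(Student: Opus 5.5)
We work in complex coordinates and diagonalize the midpoint map with the discrete Fourier transform on $\mathbb Z/6$. Write the vertices of $P_n$ as $z^{(n)}_k$, $k\in\mathbb Z/6$, and expand $z^{(n)}_k=\sum_{j=0}^{5} c^{(n)}_j\,\omega^{jk}$ with $\omega=e^{2\pi i/6}$. The map $z_k\mapsto (z_k+z_{k+1})/2$ acts diagonally: $c^{(n+1)}_j=\lambda_j c^{(n)}_j$ with $\lambda_j=(1+\omega^j)/2$. Explicitly,
\[
\lambda_0=1,\quad \lambda_1=\tfrac{\sqrt3}{2}e^{i\pi/6},\quad \lambda_2=\tfrac12 e^{i\pi/3},\quad \lambda_3=0,\quad \lambda_4=\overline{\lambda_2},\quad \lambda_5=\overline{\lambda_1}.
\]
Two consequences follow at once.
\begin{itemize}
\item $c_0$ is invariant and equals the vertex centroid of $P_0$. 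Since the polygon centroid is translation-equivariant, we may translate to $c_0=0$ and study $G_n$ itself.
\item The mode $c_3$ is annihilated, so for every $n\ge1$ only the modes $1,2,4,5$ are present. This is why $G_0$ is excluded.
\end{itemize}

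Next, we express the centroid formula in Fourier coordinates. The signed doubled area is a Hermitian quadratic form, $2A=6\sum_j \sin(2\pi j/6)\,\abs{c_j}^2$. In the variables present for $n\ge1$ this reads $2A=3\sqrt3\,(\abs{c_1}^2+\abs{c_2}^2-\abs{c_4}^2-\abs{c_5}^2)$. The numerator $\sum_k (z_k+z_{k+1})\,\mathrm{Im}(\overline{z_k}z_{k+1})$ is a cubic form of type $(2,1)$ in $(z,\bar z)$. It is invariant, up to the factor $\omega^{\cdot}$ of its own output, under the cyclic shift $z_k\mapsto z_{k+1}$, which sends $c_j\mapsto\omega^jc_j$. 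Hence it is a combination of monomials $c_ac_b\overline{c_{a+b}}$ with indices taken mod $6$.

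With indices restricted to $\{1,2,4,5\}$, the possible monomials are
\[
c_1^2\bar c_2,\quad c_5^2\bar c_4,\quad c_2c_5\bar c_1,\quad c_1c_4\bar c_5,\quad c_2^2\bar c_4,\quad c_4^2\bar c_2 .
\]
Under one iteration each monomial is multiplied by $\lambda_a\lambda_b\overline{\lambda_{a+b}}$. A phase check shows that the first four monomials are multiplied by exactly $3/8$, a positive real number. The last two are multiplied by $-1/8$.

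The key step, and the one I expect to be the main obstacle, is to show that the coefficients of $c_2^2\bar c_4$ and $c_4^2\bar c_2$ in the centroid numerator vanish identically when $N=6$. I would compute the coefficient of a general monomial $c_ac_b\overline{c_{d}}$ directly. Substitute the Fourier expansion into $\sum_k (z_k+z_{k+1})(\overline{z_k}z_{k+1}-z_k\overline{z_{k+1}})$ and sum over $k$; this produces an explicit trigonometric expression in $a,b$. Evaluating it at $(a,b)=(2,2)$ and $(4,4)$ should give zero, because these modes are the ``triangle'' modes: $z_k$ built from $c_2,c_4$ alone traces two superimposed triangles. For such a configuration one can also check directly that the centroid lies at the origin, which would confirm the vanishing. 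If this identity holds, then
\[
G_n=\frac{(3/8)^{n-1}\,M}{3\sqrt3\bigl(\alpha(3/4)^{n-1}+\beta(1/4)^{n-1}\bigr)}\qquad(n\ge1),
\]
where $M\in\C$ is the fixed cubic value at $n=1$, and $\alpha=\abs{c_1}^2-\abs{c_5}^2$, $\beta=\abs{c_2}^2-\abs{c_4}^2$ are evaluated at $n=1$. Every $G_n$ is therefore a real multiple of $M$, so all of them lie on the line through $c_0$ in direction $M$.

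Finally, for convergence we consider two cases. If $\alpha\ne0$, the real factor behaves like $(1/2)^{n}/\alpha$. It eventually has constant sign and decreases in absolute value to $0$, which gives eventual monotone convergence to $c_0$. If $\alpha=0$, the factor is a multiple of $(3/2)^n$ unless $M=0$. I expect to show that $\alpha=0$ forces $M$ to vanish to the needed order, by examining which monomials survive when $c_1,c_5$ have equal modulus, or else to treat this as the degenerate case where the centroid is undefined. Zero-area iterates must be handled by the same convention.
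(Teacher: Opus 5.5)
Your route is the paper's: diagonalize $M$, observe that mode $3$ dies, show that the numerator $Z$ picks up the real factor $\tfrac38$ per step once modes $0,3$ are absent, and divide by the Hermitian area form. The step you flag as the main obstacle is in fact immediate. Expanding gives $Z=6\sum_{p,q}c_p\bar c_q c_{q-p}\,\Im(\omega^p+\omega^q)$, and your two bad monomials $c_2^2\bar c_4$, $c_4^2\bar c_2$ are exactly the terms $(p,q)=(2,4),(4,2)$, whose coefficient $\Im(\omega^2+\omega^4)$ is $0$. Your doubled-triangle idea also proves it, once phrased as a polynomial identity. On $\mathrm{span}(e^{(2)},e^{(4)})$ the hexagon is a twice-traversed triangle with $z_0+z_1+z_2=0$. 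Since $Z_\triangle=2A_\triangle(z_0+z_1+z_2)$ holds identically for triangles, $Z\equiv 0$ there. The monomials $c_2^2\bar c_4$ and $c_4^2\bar c_2$ are the only ones surviving on that subspace and are linearly independent, so both coefficients vanish. (Minor: since $G=Z/(6A)=Z/(3\cdot 2A)$, your denominator should be $9\sqrt3(\cdots)$, not $3\sqrt3(\cdots)$; this is harmless.)

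The genuine gap is the case $\alpha=\abs{c_1}^2-\abs{c_5}^2=0$, and neither of your plans for it can work. The condition $\alpha=0$ does not force $M=0$, and the centroid is not undefined there. Take $P_0$ with $c_1=c_5=c_2=1$ and the other modes $0$, i.e.\ $z_k=2\cos(k\pi/3)+\omega^{2k}$, whose vertices are $3,\ \tfrac12+\tfrac{\sqrt3}{2}i,\ -\tfrac32-\tfrac{\sqrt3}{2}i,\ -1,\ -\tfrac32+\tfrac{\sqrt3}{2}i,\ \tfrac12-\tfrac{\sqrt3}{2}i$. A direct shoelace computation gives $A(P_0)=\tfrac{3\sqrt3}{2}$, $Z(P_0)=12\sqrt3$, and $G_0=\tfrac43$. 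The next iterate $P_1$ consists of two oppositely oriented triangles meeting at $-\tfrac12$; it has $A=\tfrac{3\sqrt3}{8}$, $M=Z(P_1)=\tfrac{9\sqrt3}{2}\neq 0$, and $G_1=2$. In general $A(P_n)=\tfrac{3\sqrt3}{2}4^{-n}$ while $Z(P_n)=(\tfrac38)^nZ(P_0)$, so $G_n=\tfrac43(\tfrac32)^n\to\infty$ along the real axis. Geometrically, when $\abs{c_1}=\abs{c_5}$ the dominant part $c_1e^{(1)}+c_5e^{(5)}$ is a flat (segment) hexagon. The signed area then comes only from the triangle modes and decays faster than $Z$.

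So what is actually provable is the following. Colinearity holds for all $n\ge 1$ with $A(P_n)\neq 0$. Eventual monotone convergence to $c_0$ holds when $\alpha\neq 0$: there your ratio argument is right, and the denominator $\alpha+\beta 3^{-(n-1)}$ vanishes for at most one $n$. The convergence clause needs the extra hypothesis $\abs{c_1}\neq\abs{c_5}$ (or $M=0$). The paper's proof has the same hole. Its remark that the coefficient changes sign at most once gives eventual constant sign, not convergence, and for $\abs{\xi_1}=\abs{\xi_5}$ its coefficient is $(\tfrac32)^n/\bigl(9\sqrt3(\abs{\xi_2}^2-\abs{\xi_4}^2)\bigr)$. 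The correct repair is therefore to restrict the statement, not to look for a vanishing argument.
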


An example is illustrated in Figure~\ref{fig:example}. The appearance of such exact colinearity is unexpected. While the midpoint map rapidly produces shapes that are \emph{nearly} affinely regular, this alone would suggest only approximate behavior.

By contrast, the centroids of iterated 1-, 2-, 3-, and 4-gons are constant after the first iteration for elementary reasons and are generally not colinear for $m$-gons when $m=5$ or $m\ge 7$. The exact colinearity phenomenon for $m=6$ is stronger than mere shape regularization, which occurs for all polygons.

\section{The midpoint iteration and its eigenstructure in the space of hexagons}
\label{sec:midpoint_map_&_space_of_hexagons}
Hereafter, ``the plane'' is the complex plane and points in the plane are complex numbers. An oriented hexagon in the plane is represented by its vertex vector $v = (v_0,\dots,v_5) \in \C^6$ and we think of the vector space $\C^6$ as the ``space of hexagons'' and refer to its elements as \emph{hexagons}. The space of hexagons includes all sorts of self-intersecting and degenerate hexagons. This is fine and indeed, some of these creatures will play a crucial role. For convenience, we shall index vector components $0,\dots,5$, as above, and indices are always to be understood modulo 6. For obvious reasons, we shall refer to the components $v_k$ of a hexagon $v \in \C^6$ as its \emph{vertices}. 

Let $M : \C^6 \to \C^6$ denote the \emph{midpoint map} which, given any hexagon $v \in \C^6$, produces the hexagon $Mv$ by joining the midpoints of consecutive edges of $v$. Explicitly, the vertices of $Mv$ are given by $(Mv)_k = \frac12(v_k+v_{k+1})$. Thus, $M$ is a linear map on the space of hexagons. The eigenvectors of $M$ are the discrete Fourier modes $e^{(j)} = (1,\omega^j,\omega^{2j},\dots,\omega^{5j})$ for $j=0,\dots,5$ where $\omega = e^{2\pi i/6}$ is a primitive sixth root of unity and the corresponding eigenvalues are $\lambda_j = \frac{1+\omega^j}{2}$. (This follows from the general theory of circulant matrices but one may verify it readily.)

It is useful to interpret the eigenvectors $e^{(j)}$, understood as hexagons, geometrically. They are (up to similarity) all and only the oriented regular hexagons, including degenerate ones. $e^{(0)} = (1,1,1,1,1,1)$ is the hexagon with six coincident vertices, $e^{(1)} = (1,\omega,\dots,\omega^5)$ and its complex conjugate $e^{(5)} = \bar e^{(1)}$ are the \textit{bona fide} regular hexagons in both orientations with vertices at the sixth roots of unity, $e^{(2)} = (1,\omega^2,\omega^4,1,\omega^2,\omega^4)$ and its conjugate $e^{(4)}=\bar e^{(2)}$ are the double-covered equilateral triangles in both orientations, and $e^{(3)} = (1,-1,1,-1,1,-1)$ is a multiple-covered segment. The eigenvalue relation $Me^{(j)} = \lambda_j e^{(j)}$ expresses the fact that each of these hexagons is similar to its midpoint hexagon.

Thus, any hexagon may be decomposed as a complex linear combination of the six oriented regular hexagons (including the degenerate ones). We will call the terms of such a decomposition the (Fourier) \emph{modes}.

We are interested in iterating the midpoint map. If $v = \sum_{j=0}^5 \xi_je^{(j)}$ where $\xi_j \in \C$ is any hexagon then $M^nv = \sum_j \xi_j\lambda_j^ne^{(j)}$. Some observations about this iteration are immediate from this decomposition. Since $\lambda_3 = 0$, the $e^{(3)}$ mode dies on the first step. This is very suggestive: looking ahead, if the colinearity phenomenon holds in the $\xi_3=0$ case, this would explain the exceptional initial centroid. Secondly, $\lambda_0 = 1$ and $\abs{\lambda_j} < 1$ for $j\ne 0$ so the iterates converge exponentially quickly to the invariant $e^{(0)}$ mode $\xi_0e^{(0)}$. Varying the coefficient $\xi_0$ amounts to translating the hexagon $v$ in the plane. Moreover, since the sum of the (second, third, sixth) roots of unity is zero, the sum of the vertices of $v$ is $\sum_k v_k = 6\xi_0$ so $\xi_0 = \frac{v_0+\dots+v_k}{6}\in \C$ is the \emph{vertex centroid} (not to be confused with the centroid of the filled polygon) of $v$ (and of $M^nv$ for all $n$). Finally, it can be shown that every hexagon of the form $\xi_1e^{(1)} + \xi_5e^{(5)}$ is a linear image of the (\textit{bona fide}) regular hexagon. Since $\abs{\lambda_2}=\abs{\lambda_4}=\frac12 < \abs{\lambda_1}=\abs{\lambda_5} = \frac{\sqrt 3}{2}$, after translating so that $\xi_0=0$ and then scaling by $(\frac{2}{\sqrt 3})^n$, we see that that $(\frac{2}{\sqrt 3})^nM^nv$ is nearly affinely regular (if $\xi_1$ and $\xi_5$ are not both zero). One can show further that the limiting shapes alternate between two fixed affine images of the regular hexagon, up to an exponential error, but this is incidental to the exact colinearity phenomenon we wish to explain.

The Fourier decomposition of polygons in the complex plane used here is standard in the study of iterated maps on polygons. See, for example, \cite{Douglas1940,Schoenberg1950,Neumann1941}.

\section{The centroids}

The centroid $G(v) \in \C$ of a hexagon $v \in \C^6$ is given in terms of the vertices $v_k$ by
\begin{equation}
    G(v) = \frac{Z(v)}{6A(v)},
    \;\;
    A(v) = \frac12\sum_k \Im(\bar v_kv_{k+1}),
    \;\;
    Z(v) = \sum_k (v_k+v_{k+1})\Im(\bar v_kv_{k+1})
    \label{eq:shoelace}
\end{equation}
where $\Im(z)$ is the imaginary part of $z\in\C$ and $\bar z$ is the complex conjugate of $z$

The real quantity $A(v)$ is the signed area of the hexagon $v$. (If one expands $v_k = x_k+iy_k$, one sees that the above formula for $A(v)$ coincides with the ``shoelace formula'' $A(v) = \frac12\sum_k (x_ky_{k+1}-x_{k+1}y_k)$ and similarly, the real and imaginary parts of $Z(v)$ in terms of $x_k,y_k$ agree with the usual formulas.) This coincides with the centroid of the filled polygon in the case of simple hexagons and but the formulas apply to arbitrary hexagons. For intuition, the reader may keep in mind the case of simple convex hexagons. As noted above, the iteration quickly falls into the simple convex case anyway (except in the degenerate case $\xi_1=\xi_5=0$ which is geometrically even simpler, in fact).

It is worth underscoring that $G(v)$ is highly non-linear in $v$. This is why the rigid exact colinearity phenomenon is surprising. Having defined $G(v)$, we can state the key lemma behind the result.

\begin{lemma}
    If $v = \sum_j \xi_je^{(j)}\in \C^6$ is any hexagon with $\xi_0=\xi_3=0$, then $Z(Mv) = \frac38 Z(v)$ where $Z$ is as defined in \eqref{eq:shoelace}.
    \label{lemma:real_centroid_scaling}
\end{lemma}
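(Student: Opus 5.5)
The plan is to expand $Z$ in the Fourier coefficients $\xi_j$ and show that every monomial that survives the hypothesis $\xi_0=\xi_3=0$ is multiplied by exactly $\tfrac38$ under $M$. Write $\Im(\bar v_kv_{k+1}) = \frac{1}{2i}(\bar v_kv_{k+1}-v_k\bar v_{k+1})$. Then $Z(v)$ is a real cubic form: a sum over $k$ of products of two unconjugated vertices and one conjugated vertex, plus the complex conjugates of such terms. Substitute $v_k=\sum_j\xi_j\omega^{jk}$ and $\bar v_k=\sum_j\bar\xi_j\omega^{-jk}$. A product $\xi_a\xi_c\bar\xi_b$ then comes with a factor $\sum_{k=0}^5\omega^{k(a+c-b)}$ times a constant built from powers of $\omega$. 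That sum vanishes unless $a+c\equiv b \pmod 6$. Hence
\[
Z(v)=\sum_{a+c\equiv b}\bigl(c_{a,c,b}\,\xi_a\xi_c\bar\xi_b+\overline{c_{a,c,b}\,\xi_a\xi_c\bar\xi_b}\bigr)
\]
for some constants $c_{a,c,b}$, which I do not need to compute explicitly.

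Next I restrict to indices in $\{1,2,4,5\}$ and list the admissible triples, with $\{a,c\}$ unordered and $b\equiv a+c$. They are $\{1,1\};2$, $\{5,5\};4$, $\{1,4\};5$, $\{2,5\};1$, $\{2,2\};4$ and $\{4,4\};2$. Triples such as $\{1,5\};0$, $\{2,4\};0$ and $\{1,2\};3$ are killed because $\xi_0=\xi_3=0$. Since $M^nv=\sum_j\xi_j\lambda_j^ne^{(j)}$, the monomial $\xi_a\xi_c\bar\xi_b$ in $Z(Mv)$ is multiplied by $\lambda_a\lambda_c\bar\lambda_b$. I use $\lambda_1=\frac{\sqrt3}{2}e^{i\pi/6}$, $\lambda_2=\frac12e^{i\pi/3}$, $\lambda_4=\bar\lambda_2$ and $\lambda_5=\bar\lambda_1$.
\begin{itemize}
\item For $\{1,1\};2$: $\lambda_1^2\bar\lambda_2=\frac34e^{i\pi/3}\cdot\frac12e^{-i\pi/3}=\frac38$.
\item For $\{1,4\};5$: $\lambda_1\lambda_4\bar\lambda_5=\lambda_1^2\bar\lambda_2=\frac38$.
\item For $\{2,5\};1$: $\lambda_2\lambda_5\bar\lambda_1=\lambda_2\bar\lambda_1^{\,2}=\frac38$.
\item The triples $\{5,5\};4$, $\{4,1\}$-type conjugates and so on are the complex conjugates of these, so they also get the real factor $\frac38$.
\end{itemize}
So all mixed terms scale by exactly $\frac38$.

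The obstacle is the pure triangle terms $\{2,2\};4$ and $\{4,4\};2$. For these the factor is $\lambda_2^2\bar\lambda_4=\lambda_2^3=\frac18e^{i\pi}=-\frac18\neq\frac38$, so I must show their coefficients $c_{2,2,4}$ and $c_{4,4,2}$ vanish. Rather than computing them, I would argue geometrically. These coefficients are exactly the ones that appear when $v=\xi_2e^{(2)}+\xi_4e^{(4)}$ has only modes $2$ and $4$, because no other monomials occur for such $v$. Such a $v$ has period $3$, $v_{k+3}=v_k$, so it traverses the triangle $(v_0,v_1,v_2)$ twice. Therefore $Z(v)$ equals twice the triangle's $Z$, which is $2\cdot 6A_{\triangle}G_{\triangle}$. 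For a triangle the area centroid equals the vertex centroid, and here the vertex centroid is $\xi_0=0$. Hence $Z(v)=0$ for all $\xi_2,\xi_4$. Since $Z$ restricted to this subspace is the polynomial $c_{2,2,4}\xi_2^2\bar\xi_4+c_{4,4,2}\xi_4^2\bar\xi_2$ plus its conjugate, and it vanishes identically, both coefficients are zero.

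I would double-check this vanishing by a direct computation of $c_{2,2,4}$ as a fallback. With it established, every surviving monomial of $Z(Mv)$ equals $\frac38$ times the corresponding monomial of $Z(v)$, which gives $Z(Mv)=\frac38Z(v)$.
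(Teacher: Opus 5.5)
Your proof is correct and follows the paper's skeleton: the Fourier expansion, the selection rule $a+c\equiv b \pmod 6$ from summing over $k$, and the check that the surviving eigenvalue triple products equal $\tfrac38$. Where you differ is the one delicate step.

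\begin{itemize}
\item The paper computes every coefficient, $Z(v)=6\sum_{p,q}\xi_p\bar\xi_q\xi_{q-p}\Im(\omega^p+\omega^q)$. The monomials $\xi_2^2\bar\xi_4$ and $\xi_4^2\bar\xi_2$ then drop out because $\Im(\omega^2+\omega^4)=0$.
\item You never compute a coefficient. You observe three things: these two monomials are the only admissible ones whose factor is not $\tfrac38$ (it is $\lambda_2^3=-\tfrac18$); they are the only monomials present on the subspace of double-covered triangles; and $Z$ vanishes there, because a triangle's area centroid equals its vertex centroid, which is $0$ here.
\end{itemize}

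Your route explains \emph{why} the two bad terms must cancel and avoids all trigonometric bookkeeping. The paper's computation instead yields the explicit formula \eqref{eq:Z_fourier} and the general coefficient $m\,\Im(\omega^p+\omega^q)$. That coefficient is reused in Proposition~\ref{prop:m-gon_behavior} to build the non-colinear $m$-gon examples, which your coefficient-free argument does not supply.

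Two points need fixing.

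First, $Z$ is not a real form. Each of the four summands of $(v_k+v_{k+1})(\bar v_kv_{k+1}-v_k\bar v_{k+1})$ contains exactly one conjugated factor. So $Z(v)=\sum_{a+c\equiv b}c_{a,c,b}\,\xi_a\xi_c\bar\xi_b$, with no added conjugate terms. (Were $Z$ real, every centroid $Z/6A$ would lie on the real axis.) Your scaling and vanishing arguments work monomial by monomial, so nothing else changes once the expansion is stated correctly.

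Second, writing $Z_\triangle=6A_\triangle G_\triangle$ with $G_\triangle$ the vertex centroid is meaningless when $A_\triangle=0$. On the $\{2,4\}$-mode subspace this happens exactly on $\abs{\xi_2}=\abs{\xi_4}$. Use instead the polynomial identity $Z_\triangle(u_0,u_1,u_2)=2A_\triangle\,(u_0+u_1+u_2)$, which holds for all triangles. You can get it by continuity from the nondegenerate case, or directly, since it reduces to $u_2\Im(\bar u_0u_1)+u_0\Im(\bar u_1u_2)+u_1\Im(\bar u_2u_0)=0$. With this identity, $Z$ vanishes identically on that subspace, and your comparison of coefficients is justified.
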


\begin{proof}
    Let $v = \sum_j \xi_je^{(j)}$ with $\xi_0=\xi_3=0$. First, 
    \begin{equation}
        Z(v) = \sum_k (v_k+v_{k+1})\Im(\bar v_kv_{k+1}) = \frac1{2i}\sum_k(v_k+v_{k+1})(\bar v_kv_{k+1}-v_k\bar v_{k+1}).
        \label{eq:Z_expansion}
    \end{equation}
    After expanding the product on the right, consider the term $v_{k+1}\bar v_kv_k$. Substituting $v_k = \sum_j \xi_je^{(j)}_k = \sum_j \xi_j\omega^{jk}$, we have
    $
        v_{k+1}\bar v_kv_k
        = \sum_{p,q,r} \xi_p\bar\xi_q\xi_r\omega^{p(k+1)}\omega^{-qk}\omega^{rk}
        = \sum_{p,q,r} \xi_p\bar\xi_q\xi_r\omega^p\omega^{(p-q+r)k}.
    $
    Similarly for $v_k\bar v_{k+1}v_{k+1}$, $v_{k+1}\bar v_kv_{k+1}$, $v_k\bar v_{k+1}v_k$ but with the factor of $\omega^p$ replaced by $\omega^{r-q}$, $\omega^{p+r}$, $\omega^{-q}$, respectively. Altogether, $(v_k+v_{k+1})(\bar v_kv_{k+1}-v_k\bar v_{k+1}) = \sum_{p,q,r} \xi_p\bar\xi_q\xi_r(\omega^p-\omega^{r-q}+\omega^{p+r}-\omega^{-q})\omega^{(p-q+r)k}$.
    Substituting into \eqref{eq:Z_expansion} and summing over $k$, the only terms which survive are those for which $p-q+r \equiv 0 \pmod 6$. But in that case, $r-q\equiv -p$ and $p+r\equiv q$ so
    \[
        Z(v) 
        = \frac{6}{2i}\sum_{p-q+r\equiv 0} \xi_p\bar\xi_q\xi_r(\omega^p-\omega^{-p}+\omega^q-\omega^{-q})
        = 6\sum_{p,q} \xi_p\bar\xi_q\xi_{q-p}\Im(\omega^p+\omega^q).
    \]
    By assumption, $\xi_0=\xi_3 = 0$ so all terms in which $p$, $q$, or $q-p$ are $0$ or $3$ vanish. Moreover, $\Im(\omega^p+\omega^q)=0$ whenever $p+q\equiv0\pmod 6$. This leaves only the four terms $(p,q) \in \{(1,2),(5,4),(4,5),(2,1)\}$ and in each case, $\Im(\omega^p+\omega^q)=\sqrt{3}$. Thus,
    \begin{equation}
        Z(v)
        = 6\sqrt 3\big( \xi_1\bar\xi_2\xi_1-\xi_5\bar\xi_4\xi_5 + \xi_4\bar\xi_5\xi_1 - \xi_2\bar\xi_1\xi_5 \big).
        \label{eq:Z_fourier}
    \end{equation}
    Since \eqref{eq:Z_fourier} holds for any $v = \sum_j\xi_je^{(j)}$ with $\xi_0=\xi_3=0$, it applies also to $Mv = \sum_j\lambda_j\xi_je^{(j)}$ replacing $\xi_j$ by $\lambda_j\xi_j$. By direct calculation, $\lambda_1\bar\lambda_2\lambda_1 = \lambda_5\bar\lambda_4\lambda_5 = \lambda_4\bar\lambda_5\lambda_1 = \lambda_2\bar\lambda_1\lambda_5 = \frac38$. Thus, $Z(Mv) = \frac38Z(v)$. 
\end{proof}

Most of the above proof amounts to computing $Z(v)$ and $Z(Mv)$ in terms of the Fourier coefficients $\xi_j$ and $\lambda_j\xi_j$ and observing the abundant cancellation. The real miracle though occurs in the very last line: all the surviving terms include a triple product $\lambda_p\bar\lambda_q\lambda_{q-p}$ of eigenvalues all of which have the \emph{same} real value $\frac38$. (Such products are all real, it's that they are all equal which is remarkable.)

This lemma already explains the colinearity. Start with hexagon $v=\sum_j\xi_je^{(j)}$.  Without loss, we may assume $\xi_0=0$. The first iteration $Mv$ kills $\xi_3$. Thereafter, the numerators $Z(M^nv)$ in $G(M^nv)$ pick up a real factor of $\frac38$ at each iteration and the denominators $6A(M^nv)$ are all real. So $G(M^nv)$ lie on a common line through the origin for all $n\ge 1$. (In general, a common line through $\xi_0$.) But we can say more:

\begin{theorem}
    Let $v = \sum_j \xi_j e^{(j)}$ be any hexagon with $\xi_0=\xi_3=0$. Then
    \begin{equation}
        G(M^nv) = \frac{1}{2^n}\Big(9\sqrt 3\big(\abs{\xi_1}^2-\abs{\xi_5}^2+3^{-n}(\abs{\xi_2}^2-\abs{\xi_4}^2)\big)\Big)^{-1}Z(v)
        \label{eq:centroid_fourier}
    \end{equation}
    In particular, $G(M^nv)$ is a real multiple of $G(v) = Z(v)/6A(v)$ for every $n$, thus they all lie on a common line through the origin. Moreover, the convergence along this line is eventually monotonic. 
    \label{thm:centroid_fourier}
\end{theorem}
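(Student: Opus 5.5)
The plan is to compute the signed area $A$ in Fourier coordinates, in the same way Lemma~\ref{lemma:real_centroid_scaling} handles $Z$. Then I combine it with $Z(M^nv) = (3/8)^n Z(v)$, which follows by induction from Lemma~\ref{lemma:real_centroid_scaling}. The induction is legitimate because $M$ preserves the conditions $\xi_0=\xi_3=0$: we have $\lambda_0\xi_0 = 0$ and $\lambda_3=0$.

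\emph{Step 1 (area).} Write $A(v) = \frac12\sum_k \Im(\bar v_k v_{k+1})$ and substitute $v_k=\sum_j \xi_j\omega^{jk}$. This gives $\bar v_kv_{k+1} = \sum_{q,p}\bar\xi_q\xi_p\omega^{p}\omega^{(p-q)k}$. Summing over $k$ kills every term except $p=q$, so
\[
A(v) = 3\sum_j \abs{\xi_j}^2\Im(\omega^j).
\]
With $\xi_0=\xi_3=0$, and using $\Im\omega=\Im\omega^2=\frac{\sqrt3}{2}$ and $\Im\omega^4=\Im\omega^5=-\frac{\sqrt3}{2}$, this becomes
\[
A(v)=\tfrac{3\sqrt3}{2}\big(\abs{\xi_1}^2-\abs{\xi_5}^2+\abs{\xi_2}^2-\abs{\xi_4}^2\big).
\]

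\emph{Step 2 (iterate).} Apply Step 1 to $M^nv$, replacing each $\xi_j$ by $\lambda_j^n\xi_j$. Since $\abs{\lambda_1}^2=\abs{\lambda_5}^2=\frac34$ and $\abs{\lambda_2}^2=\abs{\lambda_4}^2=\frac14$, we get
\[
A(M^nv)=\tfrac{3\sqrt3}{2}\big(\tfrac34\big)^n\big(\abs{\xi_1}^2-\abs{\xi_5}^2+3^{-n}(\abs{\xi_2}^2-\abs{\xi_4}^2)\big).
\]
Then
\[
G(M^nv)=\frac{(3/8)^nZ(v)}{6A(M^nv)}.
\]
The factors simplify because $(3/8)^n/(3/4)^n=2^{-n}$ and $6\cdot\frac{3\sqrt3}{2}=9\sqrt3$. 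This yields \eqref{eq:centroid_fourier}. Since the bracket is real, $G(M^nv)$ is a real multiple of $Z(v)$, and hence of $G(v)$ whenever $A(v)\neq0$. So all the centroids lie on the line $\mathbb R Z(v)$ through the origin.

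\emph{Step 3 (monotonicity).} Write $a=\abs{\xi_1}^2-\abs{\xi_5}^2$ and $b=\abs{\xi_2}^2-\abs{\xi_4}^2$. The real coefficient is then $c_n = 2^{-n}/(9\sqrt3(a+3^{-n}b))$. If $a\ne0$, then $a+3^{-n}b$ eventually has the sign of $a$, and
\[
\frac{c_{n+1}}{c_n}=\frac12\cdot\frac{a+3^{-n}b}{a+3^{-n-1}b}\to\frac12 .
\]
So eventually $c_n$ has constant sign and $\abs{c_n}$ decreases strictly to $0$. This gives eventually monotone convergence along the line to the origin, which is the vertex centroid.

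I expect the only delicate point to be the degenerate cases rather than the algebra. If $a=b=0$, every area vanishes and $G$ is undefined. If $a=0\ne b$, then $c_n=(3/2)^n/(9\sqrt3\, b)$ is monotone but divergent, unless $Z(v)=0$. I would treat these separately, by noting that the centroid formula requires $A\ne0$ and by checking from \eqref{eq:Z_fourier} whether $Z(v)$ can be nonzero there. The generic statement needs only $a\ne0$, which holds whenever the limiting affinely regular shape is nondegenerate.
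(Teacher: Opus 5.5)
Your proposal is correct and is essentially the paper's proof: compute $A$ in Fourier coordinates (your $-\abs{\xi_4}^2$ is right; the paper's $-\abs{\xi_3}^2$ is a typo), replace $\xi_j$ by $\lambda_j^n\xi_j$, and divide into $Z(M^nv)=(3/8)^nZ(v)$ from iterating Lemma~\ref{lemma:real_centroid_scaling}. Your ratio argument $c_{n+1}/c_n\to\frac12$ makes the monotonicity more precise than the paper's sign-change remark, and the degenerate case you flag is a real gap in the paper's claim: for instance $\xi_1=\xi_2=1$, $\xi_5=i$, $\xi_4=0$ gives $a=0$, $b=1$ and $Z(v)\neq0$, so $\abs{G(M^nv)}$ grows like $(3/2)^n$ and the centroids diverge.
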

\begin{proof}
    We may express $A(v) = \frac12\sum_k\Im(\bar v_kv_{k+1})$ in terms of the coefficients $\xi_j$ as we did $Z(v)$ in the proof of Lemma~\ref{lemma:real_centroid_scaling}. Here, $\bar v_kv_{k+1} = \sum_{p,q}\bar\xi_p\xi_q\omega^q\omega^{(q-p)k}$. Summing over $k$, only the diagonal terms terms survive so $A(v) = \frac12\Im(\sum_k\bar v_kv_{k+1}) = \frac12\Im(6\sum_j \bar\xi_j\xi_j\omega^j) = 3\sum_j\abs{\xi_j}^2\Im(\omega^j) = \frac{3\sqrt 3}{2}(\abs{\xi_1}^2-\abs{\xi_5}^2+\abs{\xi_2}^2-\abs{\xi_3}^2)$. Then, $A(M^nv)$ is obtained by replacing each $\xi_j$ with $\lambda_j\xi_j$. This together with Lemma~\ref{lemma:real_centroid_scaling} and $\abs{\lambda_1}^2 = \abs{\lambda_5}^2 = \frac34$ and $\abs{\lambda_2}^2 = \abs{\lambda_4}^2 = \frac14$ yields \eqref{eq:centroid_fourier}. Eventual monotonicity follows from the fact that the coefficient in \eqref{eq:centroid_fourier} changes sign at most once as $n$ increases.
\end{proof}

The assumption $\xi_0=\xi_3=0$ is not restrictive. Without loss, we may always assume $\xi_0 = 0$ by first translating so that the vertex centroid---which is fixed under the midpoint map---is at the origin. Then the first iteration kills $\xi_3$ whence Theorem~\ref{thm:centroid_fourier} applies, thereby proving Theorem~\ref{thm:centroid_colinearity}.

\section{The colinearity phenomenon is unique to hexagons}
Does the phenomenon occur for $m$-gons? We claim that in all cases except $m=6$, the colinearity generally fails or else the centroid sequence is constant for elementary reasons.

\begin{proposition}
    The centroids of $m$-gons under the midpoint iteration are invariant after the first step for $m=1,2,3,4$. For $m=5$ and $m\ge 7$, there are instances in which no two of the iterated centroids are colinear with their limit point. (In particular, they are not eventually colinear.)
    \label{prop:m-gon_behavior}
\end{proposition}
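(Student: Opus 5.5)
For $m=1,2,3,4$ I would argue directly from the geometry. For $m=1,2$ the iterates are a point, or a segment traversed back and forth. The signed area vanishes, so $G$ is the degenerate $0/0$ expression; I would adopt the natural convention that it equals the vertex centroid, which is invariant under $M$. For $m=3$ the medial triangle has the same centroid as the original triangle, and the centroid of a filled triangle equals its vertex centroid $\xi_0$. For $m=4$, Varignon's theorem says that $Mv$ is a parallelogram, hence centrally symmetric. Its filled centroid is therefore its centre, which is its vertex centroid $\xi_0$. All later iterates are again parallelograms with the same vertex centroid, so $G(M^nv)=\xi_0$ for all $n\ge1$.

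For $m=5$ and $m\ge7$ I would redo the computation of Lemma~\ref{lemma:real_centroid_scaling} with $\omega=e^{2\pi i/m}$ and $\lambda_j=\frac{1+\omega^j}{2}$. Verbatim, it gives
\[
Z(v)=m\sum_{p,q}\xi_p\bar\xi_q\xi_{q-p}\,\Im(\omega^p+\omega^q),
\qquad
A(v)=\frac m2\sum_j\abs{\xi_j}^2\Im(\omega^j),
\]
and $M^n$ acts by $\xi_j\mapsto\lambda_j^n\xi_j$. Translating so that $\xi_0=0$, the limit point is the origin. Two centroids are then colinear with the limit exactly when the ratio of their $Z$-values is real, because $A$ is real. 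I therefore want an example in which the ratio $Z(M^nv)/Z(M^{n'}v)$ is non-real for every $n\neq n'$.

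I would take $v=\xi_1e^{(1)}+\xi_2e^{(2)}+\xi_3e^{(3)}$. For $m\ge7$ the only index triples with $p+r\equiv q$ among $\{1,2,3\}$ are $(1,1,2)$, $(1,2,3)$ and $(2,1,3)$. For $m=5$ there is one extra triple, $(3,3,1)$, which can be handled the same way or killed by a genericity argument. The triple products $\lambda_p\lambda_r\bar\lambda_q$ with $p+r=q$ (no wraparound) are real and positive, since $\arg\lambda_j=\pi j/m$. Collecting terms gives
\[
Z(M^nv)=a\alpha^n+b\beta^n,\qquad \alpha=\abs{\lambda_1}^2\abs{\lambda_2},\quad \beta=\abs{\lambda_1\lambda_2\lambda_3},
\]
with
\[
a=c_{12}\,\xi_1^2\bar\xi_2,\qquad b=c'\,\xi_1\xi_2\bar\xi_3 .
\]
Here $c_{12}$ and $c'$ are real multiples of $\sin\frac{2\pi}m+\sin\frac{4\pi}m$ and of $\sin\frac{2\pi}m+\sin\frac{6\pi}m=2\sin\frac{4\pi}m\cos\frac{2\pi}m$. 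Both are nonzero for $m\ge5$. Moreover $\alpha\ne\beta$, since $\abs{\lambda_1}\neq\abs{\lambda_3}$ for $m\ge 5$.

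Now choose the $\xi_j$ so that $t=a/b$ is non-real, and also so that $A(M^nv)\neq0$ for all $n$; the latter holds, for example, when $\abs{\xi_1}$ dominates. Put $s=\alpha/\beta\in\mathbb R\setminus\{1\}$. Then $Z(M^nv)=b\beta^n(ts^n+1)$, and
\[
\Im\big((ts^n+1)(\bar t s^{n'}+1)\big)=\Im(t)(s^n-s^{n'})\neq0
\]
for $n\ne n'$. So no two centroids are colinear with the origin.

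The main obstacle I expect is the bookkeeping of index triples modulo $m$. Wraparound triples such as $(3,3,1)$ when $m=5$, and the negative-index conjugate-type terms, must not conspire to make the two surviving coefficients parallel or zero. I would check the case $m=5$ explicitly, or argue that the bad parameter set is a proper real-algebraic subset and pick $\xi$ outside it.
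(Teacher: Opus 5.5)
\textbf{Summary.} Your treatment of $m\le 4$ and $m\ge 7$ is essentially the paper's argument, but the case $m=5$ is deferred rather than proved, and several claims you state ``for $m\ge5$'' are false there.

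\textbf{The cases $m\le4$ and $m\ge7$.} You use the same test family $\xi_1e^{(1)}+\xi_2e^{(2)}+\xi_3e^{(3)}$ and the same two rates: your $\alpha=\lambda_1^2\bar\lambda_2$ and $\beta=\lambda_1\lambda_2\bar\lambda_3$ are the paper's $\mu$ and $\nu$. Your identity $\Im(t)(s^n-s^{n'})\neq 0$ is equivalent to the paper's observation that the slopes $\theta(\nu/\mu)^n$ are pairwise distinct.

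There is one bookkeeping slip. The coefficient $b$ collects both the $(p,q)=(1,3)$ and the $(2,3)$ terms, so $c'=m\bigl(\sin\frac{2\pi}{m}+\sin\frac{4\pi}{m}+2\sin\frac{6\pi}{m}\bigr)$. It is not a multiple of $\sin\frac{2\pi}{m}+\sin\frac{6\pi}{m}$ alone. It is still positive for $m\ge7$, so nothing breaks.

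\textbf{The gap at $m=5$.} Three of your claims fail for $m=5$:
\begin{itemize}
\item Since $\cos\frac{3\pi}{5}<0$, we have $\arg\lambda_3\neq 3\pi/5$. Hence $\lambda_1\lambda_2\bar\lambda_3=-\cos\frac{\pi}{5}\cos^2\frac{2\pi}{5}$ is negative, not positive.
\item The $(2,3)$ term drops out, because $\Im(\omega^2+\omega^3)=0$.
\item There is the extra term $\xi_3^2\bar\xi_1\,\Im(\omega^3+\omega)$, carrying its own triple product $\lambda_3^2\bar\lambda_1$.
\end{itemize}
If that last product were a third, distinct rate, your two-exponential argument would not apply as written.

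The missing observation is that for $m=5$ one has $\lambda_3^2\bar\lambda_1=\lambda_1\lambda_2\bar\lambda_3$: both equal $-\cos\frac{\pi}{5}\cos^2\frac{2\pi}{5}$. So the extra term merges into the $\beta$-term, giving $Z(M^nv)=a\alpha^n+b\beta^n$ with
\begin{itemize}
\item $b=5\,\Im(\omega+\omega^3)\,(\xi_1\xi_2\bar\xi_3+\xi_3^2\bar\xi_1)$, and
\item $s=\alpha/\beta=-\cos\frac{\pi}{5}/\cos\frac{2\pi}{5}$, which is negative with $\abs{s}\neq 1$.
\end{itemize}
Hence $s^n\neq s^{n'}$ for $n\neq n'$, and your identity finishes the job once $b\neq0$ and $a/b\notin\mathbb R$.

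For example, $(\xi_1,\xi_2,\xi_3)=(i,-1,1)$ gives:
\begin{itemize}
\item $a=5\,\Im(\omega+\omega^2)$, which is real;
\item $b=-10i\,\Im(\omega+\omega^3)\neq 0$;
\item $A(M^nv)=\frac52\abs{\lambda_1}^{2n}\sin\frac{2\pi}{5}>0$, using $\abs{\lambda_2}=\abs{\lambda_3}$.
\end{itemize}
This is exactly the paper's pentagonal example.

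\textbf{The genericity fallback.} This does not replace the computation. The bad parameter set is a countable union, over pairs $(n,n')$, of real-algebraic sets. Showing that each one is proper still requires an argument, such as the rate coincidence above, and you have not given one.
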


\begin{proof}
    The cases $m=1$ and $m=2$ are trivial. For $m=3$, the centroid of a filled triangle coincides with its vertex centroid which is invariant under the midpoint map by direct calculation. For $m=4$, by Varignon's Theorem, the midpoint polygon of any quadrilateral is a (possibly degenerate) parallelogram (by an elementary geometric proof: each of a pair of opposite sides of the midpoint quadrilateral is parallel to the diagonal and half its length), whence the centroid is invariant after the first iteration.

    To see the failure of colinearity for all other $m\ne6$, we again engage with the algebra. Now, $\C^m$ is the space of $m$-gons. As before, $(Mv)_k = \frac12(v_k+v_{k+1})$. The eigenvectors of $M$ are $e^{(j)} = (1,\omega^j,\omega^{2j},\dots,\omega^{(m-1)j})$ for $j=0,\dots,m-1$ where now $\omega = e^{2\pi i/m}$ and the corresponding eigenvalues are $\lambda_j = \frac{1+\omega^j}{2}$. The centroid $G(v) = Z(v)/6A(v)$ is defined exactly as before. Forgive the abuse of notation: in this section, $\omega, e^{(j)}, \lambda_j$ are understood relative to whichever $m$ is under consideration.

    As in the proof of Lemma~\ref{lemma:real_centroid_scaling}, we find $Z(v) = m\sum_{p,q}\xi_p\bar\xi_q\xi_{q-p}\Im(\omega^p+\omega^q)$ for $v = \sum_j\xi_je^{(j)}$ with indices understood mod $m$. Of course, $Z(M^nv)$ is then obtained by replacing $\xi_j$ with $\lambda_j^n\xi_j$ and thus, the $(p,q)$ term in the expansion picks up a factor of $(\lambda_p\bar\lambda_q\lambda_{q-p})^n$.

    Let $m\ge 7$ and consider an $m$-gon $v \in \C^m$ of the form $v = \xi_1e^{(1)}+\xi_2e^{(2)}+\xi_3e^{(3)}$. For such $v$, the only possibly non-zero terms in the $Z(v)$ expansion come from $(p,q) \in \{(1,2),(1,3),(2,3)\}$. Write $\mu=\lambda_1\bar\lambda_2\lambda_1$ and $\nu=\lambda_1\bar\lambda_3\lambda_2 = \lambda_2\bar\lambda_3\lambda_1$. By direct calculation $\lambda_p\bar\lambda_q\lambda_{q-p} = \frac14\Re(1+\omega^p+\omega^q+\omega^{q-p})$ where $\Re(z)$ is the real part of $z$ and, as one can check using $\Re(\omega^j)=\cos\frac{2\pi j}{m}$ and the double- and triple-angle formulas, $\nu/\mu = 2\cos\frac{2\pi}m-1$. Thus, for the particular $m$-gon $v_* = ie^{(1)} - e^{(2)}+e^{(3)}$, we have $M^nv_* = i\lambda_1^ne^{(1)} - \lambda_2^ne^{(2)} + \lambda_3^ne^{(3)}$ and $Z(M^nv_*) = m(\mu^n\Im(\omega+\omega^2) - \nu^ni\Im(\omega+\omega^3) - \nu^ni\Im(\omega^2+\omega^3))$. The centroids $G(M^nv_*)$ converge to the origin, so if they were (eventually) colinear, their line would have to pass through the origin. But $Z(M^nv_*)$ (and hence $G(M^nv_*)$) is on the line through the origin of slope $\frac{\Im Z(M^nv_*)}{\Re Z(M^nv_*)} = \theta(\nu/\mu)^n$ for the (real) constant $\theta = -\Im(\omega+\omega^2+2\omega^3)/\Im(\omega+\omega^2) \ne 0$. Since $\nu/\mu = 2\cos\frac{2\pi}{m}-1 \in (0,1)$ (as $m\ge 7$), these lines are all distinct.

    The pentagonal case $m=5$ is similar but the conjugacy of $\omega^2$ and $\omega^3$ upsets the above analysis. In this case, for $v\in\C^5$ of the form $v = \xi_1e^{(1)}+\xi_2e^{(2)}+\xi_3e^{(3)}$, the non-vanishing terms in the expansion of $Z(v)$ are $(p,q)\in \{(1,2),(1,3),(3,1)\}$. Let $\mu = \lambda_1\bar\lambda_2\lambda_1$ and $\nu =\lambda_1\bar\lambda_3\lambda_2=\frac14\Re(1+\omega+\omega^2+\omega^3)=\frac14\Re(1+\omega+\omega^3+\omega^3)=\lambda_3\bar\lambda_1\lambda_3$. Here, for $v_*=ie^{(1)}-e^{(2)}+e^{(3)}$, we have $Z(M^nv_*) = 5(\mu^n\Im(\omega+\omega^2)-2i\nu^n\Im(\omega+\omega^3))$ so $Z(M^nv_*)$ (and hence $G(M^nv_*)$) lies on the line through the origin of slope $\frac{\Im Z(M^nv_*)}{\Re Z(M^nv_*)} = \tilde\theta(\nu/\mu)^n$ where $\tilde\theta = -2\Im(\omega+\omega^3)/\Im(\omega+\omega^2) \ne 0$. Since $\nu/\mu = 2\cos\frac{2\pi}{5}-1 \in (-1,0)$, these lines are all distinct.
\end{proof}

\noindent{\scshape Acknowledgments.} The author thanks Dao Thanh Oai for pointing out the colinearity phenomenon in a MathOverflow discussion: \url{https://mathoverflow.net/q/507514/490554}. This note is adapted from the author's answer there.

\bibliographystyle{vancouver}
\bibliography{references}

\end{document}